\documentclass[11pt]{article}
\usepackage[a4paper,margin=1in]{geometry}
\usepackage{amsmath,amssymb,amsthm}
\usepackage{hyperref}

\numberwithin{equation}{section}

\newtheorem{theorem}{Theorem}
\newtheorem{lemma}[theorem]{Lemma}
\newtheorem{proposition}[theorem]{Proposition}

\theoremstyle{remark}
\newtheorem{remark}[theorem]{Remark}

\title{\textbf{Optimizing the Upper-Bound Constant for the Crossing Number of Polynomial Curve Systems}}
\author{Hyungryul Baik}
\date{\today}

\begin{document}
\maketitle

\begin{abstract}
Baader, J\"org, and Parlier recently established an upper bound for the crossing number
of curve systems of size $m\asymp g^{1+\alpha}$ on a genus $g$ surface, obtaining a leading
coefficient of $9/4=2.25$. Their construction relies on fibre surfaces associated with
complete bipartite graphs and uses a symmetric parameter choice corresponding to the
central binomial coefficient. In this note, we optimize their construction by relaxing
the parameter symmetry and solving the resulting entropy balance problem.
We show that for every $\alpha>0$ and every $\varepsilon>0$,
\[
\mathrm{Cr}\bigl(g,\lfloor g^{1+\alpha}\rfloor\bigr)
\ \le\ (C_\star+\varepsilon)\,\alpha^2\, g^{1+2\alpha}(\log g)^2
\qquad (g\ \text{sufficiently large}),
\]
where
\[
C_\star\ =\ \inf_{0<x\le 1/2}\ \frac{2x}{H(x)^2}
\ \approx\ 1.5805443269,
\qquad
H(x)=-x\log x-(1-x)\log(1-x).
\]
This reduces the previous constant by about $30\%$ while staying within the same
topological framework.
\end{abstract}

\medskip\noindent\textbf{Keywords:} crossing number; curve systems; surfaces; fibre surfaces; bipartite graphs; entropy.

\section{Introduction}
Throughout, $\log$ denotes the natural logarithm.
The problem of minimizing intersections among families of curves on surfaces is a natural
topological analogue of the classical graph crossing number problem, which dates back to
Tur\'an's brick factory problem and Zarankiewicz's work on complete bipartite graphs \cite{Zar}.
The planar crossing lemma (see e.g.\ \cite{Sze} and references therein) provides powerful lower
bounds for graphs, while curve systems on high-genus surfaces involve a rich interplay between
combinatorial topology and geometry.

For a closed orientable surface $\Sigma_g$ of genus $g$, consider a family
$\Gamma=\{\gamma_1,\dots,\gamma_m\}$ of $m$ pairwise non-isotopic simple closed curves.
The \emph{crossing number} of $\Gamma$ is
\[
\mathrm{cr}(\Gamma)\ :=\ \sum_{1\le i<j\le m} i(\gamma_i,\gamma_j),
\]
where $i(\cdot,\cdot)$ denotes geometric intersection number. The corresponding extremal function is
\[
\mathrm{Cr}(g,m)\ :=\ \min\bigl\{ \mathrm{cr}(\Gamma)\ \bigm|\ \Gamma
\ \text{as above with}\ \#\Gamma=m \bigr\}.
\]
Baader, J\"org, and Parlier \cite{BJP} determined $\mathrm{Cr}(g,m)$ for polynomial-size curve systems
$m\asymp g^{1+\alpha}$ up to absolute constants. Combining their constructive upper bound with the crossing
inequality of Hubard and Parlier \cite{HP}, they show that for $m=\lfloor g^{1+\alpha}\rfloor$,
\begin{equation}\label{eq:BJP-main}
\frac1{257}\,\alpha^2\, g^{1+2\alpha}(\log g)^2
\ \le\ \mathrm{Cr}\bigl(g,\lfloor g^{1+\alpha}\rfloor\bigr)
\ \le\ \frac94\,\alpha^2\, g^{1+2\alpha}(\log g)^2
\end{equation}
for all $\alpha\ge 0$ and all sufficiently large $g$.
The Baader--J\"org--Parlier upper bound is achieved by an explicit curve system on a fibre surface $\Sigma(p,q)$:
a ribbon neighbourhood of the complete bipartite graph $K_{p,q}$, using a symmetric choice that corresponds
to a central binomial coefficient. The purpose of this note is to show that the constant $\frac94$ is not
a limitation of the underlying fibre-surface framework, but rather a byproduct of that symmetric choice.
\medskip

More precisely, we identify the optimization problem that is implicit in the construction of \cite{BJP}.
Within the family of curve systems obtained as boundary curves of subsurfaces $\Sigma(2,k)\subset\Sigma(p,q)$
(with $q\asymp \log g$ and $k/q\to x\in(0,1/2]$), the number of curves is governed by the exponential growth of
$\binom{q}{k}$, while the crossing estimate depends linearly on $k$.
This yields an entropy--intersection tradeoff with asymptotic leading constant
\[
f(x)\ =\ \frac{2x}{H(x)^2},\qquad H(x)=-x\log x-(1-x)\log(1-x),
\]
and optimizing over $x$ produces
\[
C_\star=\inf_{0<x\le 1/2} f(x).
\]
In this sense, $C_\star$ is the minimum produced by this entropy-balance optimization of the fibre-surface
construction of \cite{BJP} (without claiming optimality among all possible constructions on $\Sigma_g$).

\begin{theorem}[Improved upper-bound constant]\label{thm:main}
Fix $\alpha>0$ and $\varepsilon>0$. Let
\[
H(x)\ :=\ -x\log x-(1-x)\log(1-x),\qquad x\in(0,1),
\]
and define
\[
C_\star\ :=\ \inf_{0<x\le 1/2}\ \frac{2x}{H(x)^2}.
\]
Then there exists $N\in\mathbb{N}$ such that for all $g\ge N$,
\[
\mathrm{Cr}\bigl(g,\lfloor g^{1+\alpha}\rfloor\bigr)
\ \le\ (C_\star+\varepsilon)\,\alpha^2\, g^{1+2\alpha}(\log g)^2.
\]
Numerically, $C_\star=1.5805443269\ldots$.
\end{theorem}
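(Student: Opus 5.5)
The plan is to rerun the Baader--J\"org--Parlier fibre-surface construction of \cite{BJP} with an asymmetric parameter $k=\lfloor xq\rfloor$ in place of the central choice, and then optimise over $x$. First fix the parameter. The function $f(x)=2x/H(x)^2$ is continuous on $(0,1/2]$. Since $H(x)\sim x\log(1/x)$ as $x\to0$, we get $f(x)\sim 2/(x\log^2(1/x))\to\infty$. Hence the infimum $C_\star$ is attained at some $x_\star\in(0,1/2]$, and we may fix $x$ with $f(x)<C_\star+\varepsilon/2$. The remaining $\varepsilon/2$ will absorb all rounding and lower-order errors.

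Next choose the scales as functions of $g$. Put
\[
q=\Bigl\lceil \frac{(1+\delta)\alpha\log g}{H(x)}\Bigr\rceil,\qquad k=\lfloor xq\rfloor ,
\]
with $\delta>0$ small, to be sent to $0$ at the end. Take $p$ as large as possible subject to $\mathrm{genus}\,\Sigma(p,q)\le g$, so that $p\approx 2g/q$. Stirling's formula gives $\binom{q}{k}=e^{qH(x)+O(\log q)}=g^{(1+\delta)\alpha+o(1)}$. The curve system is the one from \cite{BJP}: boundary curves of the subsurfaces $\Sigma(2,k)\subset\Sigma(p,q)$ spanned by a pair of vertices on the $p$-side and a $k$-subset on the $q$-side. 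One restricts, as in \cite{BJP}, to the admissible pairs of $p$-vertices used there, so that the number of curves is of order $g\binom{q}{k}$. This is at least $\lfloor g^{1+\alpha}\rfloor$ because of the $\delta$ slack, and we discard curves down to exactly $\lfloor g^{1+\alpha}\rfloor$.

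Deleting curves only decreases $\mathrm{cr}$. Embedding $\Sigma(p,q)$ into $\Sigma_g$ by capping its boundary and adding handles keeps the curves simple and pairwise non-isotopic, provided capped curves stay essential and distinct (this is the same check as in \cite{BJP}, and the $k\ge 2$, $q-k\ge2$ regime ensures it). Hence it suffices to bound the crossing number of the constructed system. Here I would reuse the intersection estimate of \cite{BJP} essentially verbatim, only replacing $q/2$ by $k$:
\begin{itemize}
\item two curves intersect only if their $p$-vertex pairs share a vertex;
\item when they do, the geometric intersection is at most $2k$, up to $O(1)$ corrections.
\end{itemize}
Summing over pairs, a fraction $O(1/p)=O(q/g)$ of all $\sim m^2/2$ pairs can intersect. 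This yields
\[
\mathrm{cr}\le (1+o(1))\,2k\,q\,\frac{m^2}{g}\cdot c_{\mathrm{BJP}},
\]
where $c_{\mathrm{BJP}}$ is the same combinatorial normalisation that in the symmetric case $x=1/2$ produces exactly $9/4$ after substituting $q=\alpha\log g/\log 2$. Substituting $m=g^{1+\alpha}$, $k\approx xq$ and $q\approx\alpha\log g/H(x)$, the bound becomes
\[
(1+O(\delta)+o(1))\,\frac{2x}{H(x)^2}\,\alpha^2 g^{1+2\alpha}(\log g)^2 .
\]
Choosing $\delta$ small and then $g$ large gives the bound $(C_\star+\varepsilon)\alpha^2g^{1+2\alpha}(\log g)^2$. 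The numerical value of $C_\star$ then comes from solving $f'(x)=0$, i.e.\ $H(x)=2x\log\frac{1-x}{x}$, numerically on $(0,1/2)$.

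I expect the main obstacle to be the crossing bookkeeping. One must verify that the per-pair intersection bound in \cite{BJP} genuinely scales as $2k$, linear in the size of the chosen subset, rather than depending on $q$ or on $\min(k,q-k)$ in a way the symmetric case hides. One must also check that the normalising constant matches, so that $x=1/2$ recovers $9/4$; this is a consistency check on $c_{\mathrm{BJP}}$. I would first re-derive the intersection count for two boundary curves of $\Sigma(2,K)$ and $\Sigma(2,K')$ sharing one $p$-vertex, tracking $|K|$, $|K'|$ and $|K\cap K'|$ explicitly. Then I would confirm the sum is bounded by $2k+O(1)$ uniformly. The entropy side, Stirling plus rounding of $k$ and $q$, is routine by comparison.
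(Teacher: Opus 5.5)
Your construction is the paper's: boundary curves of $\Sigma(2,k)\subset\Sigma(p,q)$ with $q\sim(\alpha/H(x))\log g$ and $p-1\approx 2g/q$, followed by optimising over $x$. However, the two steps that actually fix the constant are not in place.

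First, the reduction to exactly $m=\lfloor g^{1+\alpha}\rfloor$ curves fails as written. ``Deleting curves only decreases $\mathrm{cr}$'' only gives $\mathrm{cr}(\overline\Gamma)\le\mathrm{cr}(\Gamma)\lesssim\frac{4k}{p-1}M^2$. Because of your deliberate $\delta$-slack, $M=(p-1)\binom qk\gtrsim g^{1+\alpha}\,g^{\delta\alpha}(\log g)^{-3/2}$. So this bound exceeds the target by a factor $(M/m)^2\gtrsim g^{2\delta\alpha}/(\log g)^3$, which tends to infinity for fixed $\delta$. Your later count, that ``a fraction $O(1/p)$ of the $\sim m^2/2$ pairs can intersect,'' holds only if the surviving curves are spread evenly over the upper pairs. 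Arbitrary deletion does not guarantee that. The paper fixes this by averaging. A uniformly random $m$-subset has expected crossing number $\frac{m(m-1)}{M(M-1)}\,\mathrm{cr}(\Gamma)$, so some subset satisfies $\mathrm{cr}(\overline\Gamma)\le(1+o(1))\frac{4k}{p-1}m^2$. Keeping the same number of lower $k$-subsets for every upper pair would also work.

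Second, the crossing constant is never established, and your calibration of it is inconsistent. Suppose $c_{\mathrm{BJP}}$ made $x=1/2$, $q=\alpha\log g/\log 2$ yield $9/4$. Then $c_{\mathrm{BJP}}=\frac94(\log 2)^2\approx 1.08$, your final constant would be $c_{\mathrm{BJP}}\,f(x)$, and its minimum is about $1.71$, not $C_\star$. Your last display silently sets $c_{\mathrm{BJP}}=1$. Indeed $f(1/2)=1/(\log 2)^2\approx 2.08\neq 9/4$. The $9/4$ of \cite{BJP} comes from a non-optimised scale of $q$ at $k=q/2$, not from a different normalisation.

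What the theorem needs is exactly $\mathrm{cr}(\Gamma(p,q;k))\le\frac{4k}{p-1}M^2$. This comes from using only the $p-1$ consecutive upper pairs. A fixed curve can meet only two kinds of curves:
\begin{itemize}
\item the $\binom qk$ curves with the same upper pair, at most $4k$ points each ($2k$ near each shared upper vertex);
\item the at most $2\binom qk$ curves with an adjacent pair, at most $2k$ points each.
\end{itemize}
That gives at most $8k\binom qk$ points in total. Halving and using $p-1\ge(1-o(1))\,2g/q$ gives $(1+o(1))\,2kq\,M^2/g$. This is where the $2$ in $2x/H(x)^2$ comes from. Your single bound ``at most $2k$ when they share a vertex'' does not distinguish the one- and two-shared-vertex cases, so the constant cannot be read off from it.

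Finally, $k$ must be odd, and $k=\lfloor xq\rfloor$ need not be. For even $k$, $\partial\Sigma(2,k)$ is the two-component link $T(2,k)$, so there is no single boundary curve. The paper takes an odd $k$ with $|k-xq|\le 2$, which costs nothing asymptotically.
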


\begin{remark}[Hierarchy of improvements]
The improvement over the constant $\frac94=2.25$ in \cite{BJP} can be viewed in two steps.
If one keeps the symmetric choice $k=q/2$ but optimizes the scale of $q$, one obtains the constant
$1/(\log 2)^2 \approx 2.08137$. Allowing $k/q\neq 1/2$ and optimizing the ratio yields the smaller
constant $C_\star$ (up to $\varepsilon$).
\end{remark}

\section{Fibre surfaces, curve families, and basic estimates}
We briefly recall the combinatorial model used in \cite{BJP}, based on the ribbon surface model of
Baader \cite{Baa14}. Let $K_{p,q}\subset \mathbb{R}^3$ be a complete bipartite graph whose $p$ and $q$
vertices lie on two skew lines $U$ (upper) and $L$ (lower), respectively. Let $\Sigma(p,q)$ be a ribbon
neighbourhood of $K_{p,q}$, i.e.\ the union of $pq$ ribbons thickening the edges of $K_{p,q}$, as in
\cite{Baa14,BJP}. Its Euler characteristic satisfies
\[
\chi(\Sigma(p,q))\ =\ \chi(K_{p,q})\ =\ p+q-pq\ =\ 1-(p-1)(q-1),
\]
and $\Sigma(p,q)$ has nonempty boundary (a torus link of type $T(p,q)$).

\begin{lemma}[Embedding criterion]\label{lem:embed}
Let $F$ be a compact connected orientable surface with nonempty boundary and Euler characteristic $\chi(F)$.
If $|\chi(F)|\le 2g-2$, then $F$ embeds as a subsurface of the closed orientable surface $\Sigma_g$.
\end{lemma}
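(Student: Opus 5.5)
The statement is a standard fact about classification of surfaces. My plan is to realize $F$ explicitly inside $\Sigma_g$ by choosing a suitable complementary surface and gluing.

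First, I will record the invariants of $F$. Let $F$ have genus $h$ and $b\ge 1$ boundary components. Then $\chi(F)=2-2h-b$, and the hypothesis $|\chi(F)|\le 2g-2$ reads $2h+b-2\le 2g-2$, that is, $2h+b\le 2g$.

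The degenerate cases need a word first. If $\chi(F)>0$, then $F$ is a disc, and a disc embeds in any surface. If $\chi(F)=0$, then $F$ is an annulus, which embeds as a neighbourhood of any simple closed curve. So I will assume $\chi(F)<0$, although the construction below works uniformly whenever $g\ge 1$.

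The main step is to build a compact orientable surface $G$ with exactly $b$ boundary components, glue it to $F$ along all boundaries, and obtain a closed surface of genus exactly $g$.

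\begin{itemize}
\item Gluing along $b$ circles gives $\chi(F\cup G)=\chi(F)+\chi(G)$, and we need this to equal $2-2g$.
\item Hence $\chi(G)=2-2g-\chi(F)=2h+b-2g\le 0$.
\item We want $G$ connected of genus $h'$ with $b$ boundary components, so $2-2h'-b=2h+b-2g$, which gives $h'=g-h-b+1$.
\end{itemize}

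This requires $h'\ge 0$, i.e.\ $h+b\le g+1$. That condition need not follow from $2h+b\le 2g$; for example, it fails when $b$ is large and $h=0$.

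So I will instead allow $G$ to be a disjoint union of surfaces, and I will check connectivity of the result separately.

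\begin{itemize}
\item Since $F$ is connected, $F\cup G$ is connected provided every component of $G$ meets $\partial F$, which holds automatically when each component of $G$ has nonempty boundary.
\item The total genus of $F\cup G$ is determined by $\chi$, because $F\cup G$ is a closed connected orientable surface.
\end{itemize}

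Concretely, I take $G$ to be a single connected surface of genus $h'$ with $b$ boundary components whenever $h'\ge 0$. When $h'<0$, I instead take $G$ to be a union of components.

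\begin{itemize}
\item $G$ consists of some discs capping off boundary circles, each contributing $+1$ to $\chi$, together with one connected piece of genus $h''\ge 0$ carrying the remaining $c\ge 1$ boundary circles.
\item Alternatively, if $\chi(G)$ forces it, $G$ consists of discs and annuli; an annulus joining two boundary circles of $F$ adds a handle.
\end{itemize}

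I will solve for the counts with $d$ discs and one piece with $c=b-d$ circles and genus $h''$. This needs $d+(2-2h''-c)=2h+b-2g$.

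Choosing $h''=0$ gives $2d-b+2=2h+b-2g$, so $d=h+b-g-1$. This is $\ge 0$ exactly in the problematic case $h'<0$. We also need $c=b-d=g-h+1\ge 1$, which holds since $h\le g$ follows from $2h+b\le 2g$.

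The main obstacle is exactly this bookkeeping: making sure the prescribed Euler characteristic of the complement is achievable by surfaces with nonnegative genus and the right number of boundary circles. Once the counts are verified, gluing orientation-reversingly along the boundary gives a closed orientable surface. It is connected, so by the classification of surfaces it is homeomorphic to $\Sigma_g$, and $F$ sits inside it as a subsurface.
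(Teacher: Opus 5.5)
Your proof is correct, but it is organized differently from the paper's. You build the complement $G$ explicitly and check $\chi(F\cup G)=2-2g$ and connectivity, with two cases. If $h'=g-h-b+1\ge 0$, you use the connected complement $\Sigma_{h',b}$. If $h'<0$, you use $d=h+b-g-1$ discs plus a planar piece with $c=g-h+1$ circles. The bookkeeping checks out.

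The paper extracts only one consequence of the hypothesis: $2h-1\le 2h+b-2\le 2g-2$, hence $h\le g-1$. It then appeals to classification, because for a bare embedding only the genus matters: $\Sigma_{h,b}\subset\Sigma_{h,1}\subset\Sigma_g$ whenever $h\le g$. From that viewpoint your case split is unnecessary. Capping $b-1$ circles with discs and the last one with $\Sigma_{g-h,1}$ works uniformly, and the paper's route is shorter.

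What your route buys is control over $\Sigma_g\setminus F$, which is what the application actually needs. If a boundary circle of $F$ bounds a disc in $\Sigma_g$, curves that are non-isotopic in $F$ can become isotopic or inessential in $\Sigma_g$. The paper's genus-only argument gives no control over this, and your $h'<0$ case explicitly creates such discs. Your discarded annulus option fixes it:
\begin{itemize}
\item pair the boundary circles by annuli, each of which adds a handle;
\item cap a leftover circle by a one-holed surface of positive genus;
\item put any remaining genus on one of these pieces.
\end{itemize}
This gives a complement with no disc components if and only if $h+\lceil b/2\rceil\le g$. For $\chi(F)\le 0$ that is equivalent to $2h+b\le 2g$, which is exactly the stated hypothesis $|\chi(F)|\le 2g-2$. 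Neither proof as written uses the hypothesis at that full strength.
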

\begin{proof}
Write $F$ as a surface of genus $h$ with $b\ge 1$ boundary components. Then $\chi(F)=2-2h-b$, so
$|\chi(F)|=2h+b-2\ge 2h-1$. Hence $h\le (|\chi(F)|+1)/2\le (2g-1)/2<g$, i.e.\ $h\le g-1$.
By the classification of surfaces, any compact orientable surface of genus at most $g-1$ with boundary embeds
in $\Sigma_g$ as a subsurface.
\end{proof}

\medskip
Fix integers $p\ge 2$, $q\ge 2$, and an odd integer $k$ with $1\le k\le q$.
Inside $\Sigma(p,q)$ consider subsurfaces obtained by choosing
two consecutive vertices on the upper line $U$ (there are $p-1$ choices) and
choosing $k$ vertices among the $q$ vertices on the lower line $L$.
The corresponding ribbon neighbourhood is naturally homeomorphic to $\Sigma(2,k)$.
Since $\gcd(2,k)=1$ (as $k$ is odd), its boundary is connected; denote this boundary curve by $\gamma$.
Let $\Gamma(p,q;k)$ be the collection of all such boundary curves. Then
\begin{equation}\label{eq:M-def}
M(p,q;k)\ :=\ \#\Gamma(p,q;k)\ =\ (p-1)\binom{q}{k}.
\end{equation}
As observed in \cite{BJP}, any two distinct curves in $\Gamma(p,q;k)$ are non-isotopic: given $\gamma\neq\delta$,
there exists a vertex of $U$ or $L$ used in defining $\gamma$ but not $\delta$, and an essential properly embedded
arc intersecting $\gamma$ but disjoint from $\delta$.

\begin{lemma}[Crossing estimate]\label{lem:crossing}
For all integers $p\ge 2$, $q\ge 2$, and odd $1\le k\le q$,
\[
\mathrm{cr}\bigl(\Gamma(p,q;k)\bigr)\ \le\ \frac{4k}{p-1}\,M(p,q;k)^2.
\]
\end{lemma}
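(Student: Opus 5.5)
We bound $\mathrm{cr}(\Gamma(p,q;k))$ by summing, over all ordered pairs of curves, an upper bound on $i(\gamma,\delta)$ obtained from the concrete representatives, and then count how many pairs can intersect at all. Each curve $\gamma\in\Gamma(p,q;k)$ is determined by a pair $(u,S)$: $u\in\{1,\dots,p-1\}$ indexes the consecutive upper vertices $\{u,u+1\}$, and $S\subset L$ with $|S|=k$. We take $\gamma$ to be the boundary of the ribbon subsurface $\Sigma_{u,S}$ formed by the $2k$ ribbons joining $\{u,u+1\}$ to $S$, pushed slightly into the interior of that subsurface.

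\textbf{Step 1 (disjointness).} Consider $\gamma=\gamma_{u,S}$ and $\delta=\gamma_{u',S'}$ with $|u-u'|\ge 2$. The upper vertex sets $\{u,u+1\}$ and $\{u',u'+1\}$ are then disjoint. I would argue that the two subsurfaces meet only in small neighbourhoods of the common lower vertices in $S\cap S'$. Near such a vertex the ribbons of the two subsurfaces attach to the vertex disc in disjoint angular sectors, because the edges at a lower vertex are ordered along $U$ and the two sectors come from disjoint upper intervals. The boundary curves, pushed into their own subsurfaces, can therefore be made disjoint, so $i(\gamma,\delta)=0$. Hence only pairs with $|u-u'|\le 1$ contribute. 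For a fixed $u$ there are at most $3$ admissible values of $u'$, which gives at most $3(p-1)\binom{q}{k}^2$ ordered pairs, i.e.\ roughly $\tfrac{3}{p-1}M^2$ pairs. (If the paper's convention is that only $u'=u$ and $u'=u\pm1$ count, I would bound the number of unordered pairs by $\tfrac{3}{2}(p-1)\binom{q}{k}^2$ to keep room in the constant.)

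\textbf{Step 2 (local intersection count).} For $|u-u'|\le 1$, intersections between the pushed-off curves occur only inside the vertex discs of shared vertices, and also along shared ribbons, where a push-off can be chosen with no crossings. At a shared lower vertex $v\in S\cap S'$, each curve passes through the vertex disc in a bounded number of arcs: $\gamma$ has two arcs there, coming from the two ribbons to $u$ and $u+1$. Two such arc systems cross at most $2$ times, or at most $4$ with a generous count. At a shared upper vertex, $\gamma$ has $k$ ribbons and crosses $\delta$'s arcs a bounded number of times per pair of interleaved ribbons. Interleaving occurs only when the cyclic orders of $S$ and $S'$ alternate, and I would bound this contribution by about $2k$ as well. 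Summing gives $i(\gamma,\delta)\le c\,k$ with $c$ small, ideally $c\le 4/3$ for ordered pairs, or $c\le 8/3$ for unordered pairs after the factor $3/2$. Combined with Step 1, this yields $\mathrm{cr}\le \tfrac{4k}{p-1}M^2$.

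\textbf{Main obstacle.} The hardest part is the precise local analysis at the shared upper vertex (or vertices) when $u'=u$ or $u'=u\pm1$. There I need an explicit arrangement of the push-offs whose crossings number at most linear in $k$ with the right constant. I would first try to reduce this to the estimate of \cite{BJP}: their argument already shows $i\le 2k$ (say) for boundaries of $\Sigma(2,k)$-type subsurfaces sharing upper vertices. I would then track the counting constant to land exactly on $4k/(p-1)$. If the constant comes out slightly larger, the weaker inequality still suffices for Theorem~\ref{thm:main}, since only the leading order in $k$ matters after optimizing.
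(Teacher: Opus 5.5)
Your Step 1 matches the paper: only curves whose upper pairs overlap can intersect. The gap is in how you combine this with the local count. You bound $\mathrm{cr}$ by the number of overlapping pairs times a single uniform bound $ck$. There are at most $\frac{3}{2(p-1)}M^2$ unordered overlapping pairs, so this yields $\frac{4k}{p-1}M^2$ only if $i(\gamma,\delta)\le \frac{8k}{3}$ for \emph{every} overlapping pair. Your own local estimates rule that out. When $u'=u$ the two curves share both upper vertices, and you estimate each at about $2k$ crossings, so such pairs get about $4k$. You also charge $2$ to $4$ crossings at every shared lower vertex, which adds another term of order $k$ when $|S\cap S'|$ is of order $k$. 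So the uniform scheme overshoots, and as written the proposal does not establish the inequality. Your ``ideally $c\le 8/3$'' is exactly the bound you cannot get.

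The missing step is to split the overlapping partners of a fixed $\gamma$ by the number of shared upper vertices:
\begin{itemize}
\item exactly $\binom{q}{k}$ curves share both upper vertices, with at most $4k$ crossings each;
\item at most $2\binom{q}{k}$ curves share exactly one, with at most $2k$ crossings each.
\end{itemize}
Then $\gamma$ meets the others at most $2\binom{q}{k}\cdot 2k+\binom{q}{k}\cdot 4k=8k\binom{q}{k}$ times. Summing over $\gamma$ and halving gives $4k\binom{q}{k}M=\frac{4k}{p-1}M^2$.

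Note that $\frac{8k}{3}$ is precisely the $1:2$ weighted average of $4k$ and $2k$, so this count leaves no room for extra lower-vertex crossings. The paper charges every intersection to a shared upper vertex and removes crossings at lower vertices by a small perturbation, and you need the same.

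Finally, your fallback that a slightly larger constant would still suffice for Theorem~\ref{thm:main} is wrong. The constant here enters the main proof linearly via $\frac{4k}{p-1}\le(2+4\delta)\frac{kq}{g}$, which is where the $2$ in $f(x)=2x/H(x)^2$ comes from. A constant $c'$ in place of $4$ would therefore replace $C_\star$ by $\frac{c'}{4}C_\star$.
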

\begin{proof}
Fix $\gamma\in\Gamma(p,q;k)$. As in \cite[\S3]{BJP}, any curve $\delta\in\Gamma(p,q;k)$ whose chosen pair of upper
vertices is disjoint from that of $\gamma$ can be isotoped to be disjoint from $\gamma$ (potential intersections at
lower vertices can be removed by a small perturbation). Thus $\delta$ can intersect $\gamma$ only if their upper pairs
overlap, i.e.\ share one or two upper vertices.
There are exactly $\binom{q}{k}$ curves sharing both upper vertices with $\gamma$, and at most $2\binom{q}{k}$ curves
sharing exactly one upper vertex with $\gamma$ (the two adjacent choices of consecutive upper pairs). Moreover, the local
contribution to intersection number near a shared upper vertex is at most $2k$ (the relevant $K_{2,k}$-subgraph has degree
$k$ at that upper vertex), hence two curves intersect at most $2k$ times if they share one upper vertex and at most $4k$ times
if they share two. This $2k/4k$ bound depends only on $k$ and is independent of $q$.
Therefore the total number of intersections between $\gamma$ and all other curves is at most
\[
\bigl(2\binom{q}{k}\bigr)\cdot (2k)\ +\ \binom{q}{k}\cdot (4k)
\ =\ 8k\binom{q}{k}.
\]
Summing over all $\gamma$ and dividing by $2$ for double counting yields
\[
\mathrm{cr}\bigl(\Gamma(p,q;k)\bigr)
\ \le\ \frac12\,M(p,q;k)\cdot 8k\binom{q}{k}
\ =\ 4k\binom{q}{k}\,M(p,q;k).
\]
Using $M(p,q;k)=(p-1)\binom{q}{k}$ gives the claim:
\[
\mathrm{cr}\bigl(\Gamma(p,q;k)\bigr)
\ \le\ 4k (p-1)\binom{q}{k}^2
\ =\ \frac{4k}{p-1}\,M(p,q;k)^2.
\]
\end{proof}

\begin{lemma}[Stirling--entropy lower bound]\label{lem:entropy}
Fix $x\in(0,1)$. There exist constants $c_x>0$ and $Q_x\in\mathbb{N}$ such that for all integers $q\ge Q_x$
and all integers $k$ with $|k-xq|\le 2$,
\[
\binom{q}{k}\ \ge\ \frac{c_x}{\sqrt{q}}\;\exp\!\bigl(q\,H(x)\bigr),
\qquad
H(x)=-x\log x-(1-x)\log(1-x).
\]
\end{lemma}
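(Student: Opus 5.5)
We reduce to the exact Stirling asymptotics and then control the shift $|k-xq|\le 2$ by a bounded multiplicative factor. The cleanest route is to use the two-sided Robbins form of Stirling's formula, valid for all $n\ge 1$:
\[
\sqrt{2\pi n}\,(n/e)^n \ \le\ n!\ \le\ e^{1/12}\sqrt{2\pi n}\,(n/e)^n .
\]
Applying the lower bound to $q!$ and the upper bound to $k!$ and $(q-k)!$ (both positive once $q$ is large, since $k\approx xq$ with $0<x<1$) gives, for $1\le k\le q-1$,
\[
\binom{q}{k}\ \ge\ e^{-1/6}\sqrt{\frac{q}{2\pi k(q-k)}}\;\exp\!\bigl(q\,H(k/q)\bigr).
\]
Here we use the identity $q^q/(k^k(q-k)^{q-k})=\exp(qH(k/q))$. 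Writing $y=k/q$, the prefactor equals $e^{-1/6}(2\pi q\,y(1-y))^{-1/2}\ge e^{-1/6}(\pi q/2)^{-1/2}$, since $y(1-y)\le 1/4$. So the prefactor is already of the form $(\text{const})/\sqrt q$, uniformly in $k$.

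The remaining step, and the only one needing care, is to replace $H(k/q)$ by $H(x)$ at the cost of a constant factor. Since $|y-x|\le 2/q$, for $q\ge Q_x$ (chosen so that $2/q\le \min(x,1-x)/2$) the point $y$ lies in the compact interval $I_x=[x/2,(1+x)/2]\subset(0,1)$. On $I_x$ the derivative $H'(t)=\log\frac{1-t}{t}$ is bounded in absolute value by some $L_x<\infty$. The mean value theorem then gives
\[
|qH(y)-qH(x)|\le qL_x|y-x|\le 2L_x,
\]
hence $\exp(qH(y))\ge e^{-2L_x}\exp(qH(x))$. The point is that the shift of $k$ by $O(1)$ changes $qH(k/q)$ only by $O(1)$, precisely because the argument moves by $O(1/q)$ while being multiplied by $q$.

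Combining the two displays yields the claim with
\[
c_x=e^{-1/6-2L_x}\sqrt{2/\pi}.
\]
We must also enlarge $Q_x$ if necessary so that $1\le k\le q-1$ for all admissible $k$; this holds once $xq-2\ge 1$ and $(1-x)q-2\ge 1$. No step is a genuine obstacle. The main point to verify is that the Stirling bounds are used with the correct directions and are applied only to positive arguments, which the choice of $Q_x$ guarantees.
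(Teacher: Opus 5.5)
Your proof is correct and follows the same route as the paper: two-sided Stirling bounds applied to $q!$, $k!$ and $(q-k)!$, followed by showing that the shift $|k-xq|\le 2$ costs only a bounded multiplicative factor. Your mean value theorem estimate $|qH(k/q)-qH(x)|\le 2L_x$ and the uniform prefactor bound via $y(1-y)\le 1/4$ spell out the step that the paper's proof only asserts.
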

\begin{proof}
This follows from Stirling's formula with explicit error bounds. One convenient form (valid for all $n\ge 1$) is
\[
\sqrt{2\pi}\,n^{n+\frac12}e^{-n}\ \le\ n!\ \le\
\sqrt{2\pi}\,n^{n+\frac12}e^{-n}\,e^{\frac1{12n}}.
\]
Applying these bounds to $q!,k!, (q-k)!$ yields
\[
\binom{q}{k}
\ \ge\
\frac{1}{\sqrt{2\pi}}\sqrt{\frac{q}{k(q-k)}}\,
\exp\!\Bigl(q\log q-k\log k-(q-k)\log(q-k)\Bigr)\,
\exp\!\Bigl(-\frac1{12k}-\frac1{12(q-k)}\Bigr).
\]
If $|k-xq|\le 2$, then $k=xq+O(1)$ and $q-k=(1-x)q+O(1)$, so the exponential term equals $\exp(qH(x))$
up to a multiplicative factor bounded away from $0$ for large $q$, while the prefactor is $\asymp 1/\sqrt{q}$
with constants depending on $x$. Absorbing these into $c_x>0$ yields the result.
\end{proof}

\section{Proof of the main theorem}
\begin{proof}[Proof of Theorem~\ref{thm:main}]
Fix $\alpha>0$ and $\varepsilon>0$.
%\smallskip\noindent
%\textbf{Choosing an almost-minimizer and error parameters.}
Choose $x\in(0,\tfrac12]$ such that
\[
\frac{2x}{H(x)^2}\ \le\ C_\star+\frac{\varepsilon}{4}.
\]
Choose $\eta>0$ so that
\begin{equation}\label{eq:eta-choice}
2x\Bigl(\frac{1}{H(x)}+\eta\Bigr)^2\ \le\ \frac{2x}{H(x)^2}+\frac{\varepsilon}{4}.
\end{equation}
By continuity at $\delta=0$, we may choose $\delta\in(0,1/2)$ sufficiently small so that
\begin{equation}\label{eq:delta-choice}
(1+\delta)(2+4\delta)(x+\delta)\Bigl(\frac{1}{H(x)}+\eta+\delta\Bigr)^2
\ \le\ 2x\Bigl(\frac{1}{H(x)}+\eta\Bigr)^2+\frac{\varepsilon}{4}.
\end{equation}
%\smallskip\noindent
%\textbf{Defining $(p,q,k)$ and embedding into $\Sigma_g$.}

For $g$ sufficiently large, define
\[
q\ :=\ \Bigl\lceil \Bigl(\frac{1}{H(x)}+\eta\Bigr)\alpha\log g\Bigr\rceil,
\qquad
k\ :=\ \text{an odd integer with }|k-xq|\le 2,
\qquad
p\ :=\ \Bigl\lfloor \frac{2g-2}{q-1}\Bigr\rfloor+1.
\]
Then $(p-1)(q-1)\le 2g-2$, hence $|\chi(\Sigma(p,q))|\le 2g-2$, and by Lemma~\ref{lem:embed} the surface $\Sigma(p,q)$
embeds as a subsurface of $\Sigma_g$. We regard $\Gamma(p,q;k)$ as a curve system on $\Sigma_g$.
%\smallskip\noindent
%\textbf{Producing more than $\lfloor g^{1+\alpha}\rfloor$ curves.}

By Lemma~\ref{lem:entropy} and the definition of $q$, for $g$ large,
\[
\binom{q}{k}\ \ge\ \frac{c_x}{\sqrt{q}}\,\exp\!\bigl(qH(x)\bigr)
\ \ge\ \frac{c_x}{\sqrt{q}}\,
\exp\!\Bigl(\alpha\log g+\eta\alpha H(x)\log g\Bigr)
\ =\ \frac{c_x}{\sqrt{q}}\,g^\alpha\,g^{\eta\alpha H(x)}.
\]
Since $q=O(\log g)$, we may assume $q\le \delta g$ by taking $g$ large. Then
$p-1=\lfloor\frac{2g-2}{q-1}\rfloor\ge \frac{2g-2}{q-1}-1 \ge \frac{g}{q}$ for large $g$, and hence
\[
M(p,q;k)\ =\ (p-1)\binom{q}{k}
\ \ge\ \frac{g}{q}\cdot \frac{c_x}{\sqrt{q}}\,g^\alpha\,g^{\eta\alpha H(x)}
\ =\ c_x\, g^{1+\alpha}\,\frac{g^{\eta\alpha H(x)}}{q^{3/2}}.
\]
As $q\asymp \log g$, the factor $g^{\eta\alpha H(x)}/q^{3/2}\to\infty$, so $M(p,q;k)>\lfloor g^{1+\alpha}\rfloor$
for all sufficiently large $g$.

%\smallskip\noindent
%\textbf{Extracting exactly $m=\lfloor g^{1+\alpha}\rfloor$ curves.}

Let $M:=M(p,q;k)$ and $m:=\lfloor g^{1+\alpha}\rfloor$. Choose a subset $\overline{\Gamma}\subset\Gamma(p,q;k)$ of size $m$
uniformly at random. Each pair of curves from $\Gamma(p,q;k)$ is selected with probability $\binom{m}{2}/\binom{M}{2}$, hence
\[
\mathbb{E}\bigl[\mathrm{cr}(\overline{\Gamma})\bigr]
\ =\ \frac{\binom{m}{2}}{\binom{M}{2}}\ \mathrm{cr}(\Gamma(p,q;k)).
\]
By the probabilistic method, there exists a specific choice of $\overline{\Gamma}$ with $\#\overline{\Gamma}=m$ such that
\begin{equation}\label{eq:subset}
\mathrm{cr}(\overline{\Gamma})
\ \le\ \frac{m(m-1)}{M(M-1)}\ \mathrm{cr}(\Gamma(p,q;k)).
\end{equation}
Using Lemma~\ref{lem:crossing} and $M\ge m+1$, we obtain
\[
\mathrm{cr}(\overline{\Gamma})
\ \le\ \frac{m(m-1)}{M(M-1)}\cdot \frac{4k}{p-1}M^2
\ =\ \frac{4k}{p-1}\,m(m-1)\,\frac{M}{M-1}.
\]
For $g$ large we have $m\ge 1/\delta$, hence $M/(M-1)\le 1+1/m\le 1+\delta$, and therefore
\begin{equation}\label{eq:cr-reduced}
\mathrm{Cr}(g,m)\ \le\ \mathrm{cr}(\overline{\Gamma})
\ \le\ (1+\delta)\,\frac{4k}{p-1}\,m^2
\ \le\ (1+\delta)\,\frac{4k}{p-1}\,g^{2+2\alpha}.
\end{equation}

%\smallskip\noindent
%\textbf{Estimating $\frac{4k}{p-1}$.}

From $p-1=\lfloor\frac{2g-2}{q-1}\rfloor$ we have
\[
p-1\ \ge\ \frac{2g-2}{q-1}-1\ =\ \frac{2g-q-1}{q-1},
\qquad\text{hence}\qquad
\frac{1}{p-1}\ \le\ \frac{q-1}{2g-q-1}\ \le\ \frac{q}{2g-q}.
\]
Assuming $q\le \delta g$, we have $q/(2g-q) \le (q/g)/(2-\delta)\le (\frac12+\delta)\,q/g$, and thus
\[
\frac{4k}{p-1}\ \le\ 4k\Bigl(\frac12+\delta\Bigr)\frac{q}{g}\ =\ (2+4\delta)\frac{kq}{g}.
\]
Since $|k-xq|\le 2$, for $g$ large we have $q\ge 2/\delta$ and hence $k\le xq+2\le (x+\delta)q$, so
\[
\frac{4k}{p-1}\ \le\ (2+4\delta)(x+\delta)\frac{q^2}{g}.
\]
Finally, from the definition of $q$ and taking $g$ large enough that $1\le \delta\,\alpha\log g$, we have
\[
q\ \le\ \Bigl(\frac{1}{H(x)}+\eta\Bigr)\alpha\log g +1
\ \le\ \Bigl(\frac{1}{H(x)}+\eta+\delta\Bigr)\alpha\log g.
\]
Hence
\[
\frac{4k}{p-1}
\ \le\ (2+4\delta)(x+\delta)\Bigl(\frac{1}{H(x)}+\eta+\delta\Bigr)^2\,
\frac{\alpha^2(\log g)^2}{g}.
\]
Combining this with \eqref{eq:cr-reduced} yields, for all sufficiently large $g$,
\[
\mathrm{Cr}\bigl(g,\lfloor g^{1+\alpha}\rfloor\bigr)
\ \le\
(1+\delta)(2+4\delta)(x+\delta)\Bigl(\frac{1}{H(x)}+\eta+\delta\Bigr)^2\,
\alpha^2\,g^{1+2\alpha}(\log g)^2.
\]
By \eqref{eq:delta-choice} and \eqref{eq:eta-choice}, the constant on the right-hand side is at most
\[
2x\Bigl(\frac{1}{H(x)}+\eta\Bigr)^2+\frac{\varepsilon}{4}
\ \le\ \frac{2x}{H(x)^2}+\frac{\varepsilon}{2}
\ \le\ C_\star+\varepsilon,
\]
which completes the proof.
\end{proof}

\section{The optimized constant and its interpretation}
This section explains the constant $C_\star$ from two complementary viewpoints: a scaling heuristic that leads to
$f(x)=2x/H(x)^2$, and a calculus condition that identifies critical points. It also clarifies in what sense $C_\star$
captures the best leading constant produced by this entropy-balance optimization of the Baader--J\"org--Parlier framework.

Fix $x\in(0,\tfrac12]$ and suppose one chooses parameters so that $k\approx xq$ and $q\approx (\alpha/H(x))\log g$.
Then $\binom{q}{k}$ grows like $\exp(qH(x))\approx g^\alpha$ (up to polynomial factors), while the embedding constraint
$(p-1)(q-1)\lesssim 2g$ suggests $p-1\approx 2g/q$. Lemma~\ref{lem:crossing} then gives a leading contribution of order
\[
\frac{4k}{p-1}\ \approx\ 2\,\frac{kq}{g}\ \approx\ 2x\,\frac{q^2}{g}
\ \approx\ \frac{2x}{H(x)^2}\,\frac{\alpha^2(\log g)^2}{g}.
\]
After extracting $m\approx g^{1+\alpha}$ curves, this leads to the heuristic leading constant
\[
f(x)\ :=\ \frac{2x}{H(x)^2}.
\]
Theorem~\ref{thm:main} makes this precise (up to an arbitrary $\varepsilon>0$), and optimizing over $x$ yields $C_\star$.

\begin{proposition}[Critical point equation]\label{prop:critical}
Let $f(x)=2x/H(x)^2$ on $(0,\tfrac12]$, where $H(x)=-x\log x-(1-x)\log(1-x)$.
If $x_0\in(0,\tfrac12)$ is a critical point of $f$, then it satisfies
\[
H(x_0)\ =\ 2x_0\log\Bigl(\frac{1-x_0}{x_0}\Bigr).
\]
In particular, using the expression for $H(x)$, this condition simplifies to
\[
x_0 \log x_0 = (1+x_0) \log(1-x_0).
\]
\end{proposition}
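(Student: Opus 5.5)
Since $f$ is smooth and strictly positive on $(0,\tfrac12)$, the argument is a direct calculus computation. The only structural point is to differentiate $\log f$ rather than $f$ itself, which turns the quotient rule into a difference of logarithmic derivatives.

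First compute $H'(x)$. Differentiating $-x\log x$ gives $-\log x-1$. Differentiating $-(1-x)\log(1-x)$ gives $\log(1-x)+1$. Hence
\[
H'(x)=\log\Bigl(\frac{1-x}{x}\Bigr).
\]
On $(0,\tfrac12)$ we have $H(x)>0$, so $\log f(x)=\log 2+\log x-2\log H(x)$ is well defined, with
\[
\frac{f'(x)}{f(x)}=\frac1x-\frac{2H'(x)}{H(x)}.
\]
Because $f(x_0)>0$, the condition $f'(x_0)=0$ is equivalent to $H(x_0)=2x_0H'(x_0)$. This is exactly the first displayed equation,
\[
H(x_0)=2x_0\log\Bigl(\frac{1-x_0}{x_0}\Bigr).
\]

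For the simplified form, substitute the definition of $H$ and expand the right-hand side as $2x_0\log(1-x_0)-2x_0\log x_0$. The equation becomes
\[
-x_0\log x_0-(1-x_0)\log(1-x_0)=2x_0\log(1-x_0)-2x_0\log x_0.
\]
Move all $\log x_0$ terms to one side and all $\log(1-x_0)$ terms to the other. The coefficient of $\log x_0$ is $-x_0+2x_0=x_0$. The coefficient of $\log(1-x_0)$ is $2x_0+(1-x_0)=1+x_0$. This yields $x_0\log x_0=(1+x_0)\log(1-x_0)$.

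There is no real obstacle in this argument. The only steps that need care are the sign bookkeeping in $H'$ and the rearrangement, together with noting that $H>0$ on the open interval, which justifies dividing by $H(x_0)$ and taking logarithms.
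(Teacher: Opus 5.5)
Your proof is correct and follows the paper's argument: compute $H'(x)=\log\bigl(\frac{1-x}{x}\bigr)$, reduce $f'(x_0)=0$ to $H(x_0)=2x_0H'(x_0)$, and rearrange the $\log x_0$ and $\log(1-x_0)$ terms. The only difference is cosmetic: you use the logarithmic derivative of $f$, whereas the paper applies the product rule to $2xH(x)^{-2}$ to get $f'(x)=\frac{2}{H(x)^3}\bigl(H(x)-2xH'(x)\bigr)$.
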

\begin{proof}
We have $H'(x)=\log\bigl(\frac{1-x}{x}\bigr)$. Differentiating $f(x)=2xH(x)^{-2}$ gives
\[
f'(x)=\frac{2}{H(x)^2}-\frac{4xH'(x)}{H(x)^3}
=\frac{2}{H(x)^3}\bigl(H(x)-2xH'(x)\bigr).
\]
Thus $f'(x_0)=0$ implies $H(x_0)=2x_0H'(x_0)=2x_0\log(\frac{1-x_0}{x_0})$.
Substituting $H(x_0) = -x_0\log x_0 - (1-x_0)\log(1-x_0)$, we get:
\[
-x_0\log x_0 - (1-x_0)\log(1-x_0) = 2x_0\log(1-x_0) - 2x_0\log x_0.
\]
Rearranging terms yields $x_0\log x_0 = (1+x_0)\log(1-x_0)$.
\end{proof}

\begin{remark}[Numerical minimizer]\label{rem:minimizer}
Since $H$ is continuous and positive on $(0,\tfrac12]$, the function $f$ is continuous there. Moreover, as $x\to 0^+$ one has
$H(x)\sim x\log(1/x)$, hence $f(x)\to\infty$, while $f(1/2)=1/(\log 2)^2\approx 2.08137$. Therefore $f$ attains its infimum
on $(0,\tfrac12]$. Any interior minimizer satisfies Proposition~\ref{prop:critical}. Numerically, solving the critical point equation
gives a minimizer near
\[
x_0\approx 0.2414851418,
\qquad\text{and}\qquad
C_\star=f(x_0)\approx 1.5805443269.
\]
\end{remark}

\begin{remark}[Optimality within this parameter-optimization scheme]\label{rem:optimality-scheme}
The proof of Theorem~\ref{thm:main} proceeds by fixing a ratio $x=k/q$, choosing $q$ on the scale
$q\sim (\alpha/H(x))\log g$ so that $\binom{q}{k}$ supplies the factor $g^\alpha$, and then taking $p$ near the embedding limit
$p-1\asymp g/q$. In that regime, the crossing estimate of Lemma~\ref{lem:crossing} yields an asymptotic constant arbitrarily close to
$f(x)=2x/H(x)^2$. Consequently, minimizing $f(x)$ over $x\in(0,\tfrac12]$ gives $C_\star$.
In this sense, $C_\star$ is the best leading constant obtainable by optimizing $k/q$ and the logarithmic scale of $q$ through this
entropy balance within the Baader--J\"org--Parlier construction \cite{BJP}. We do not claim that $C_\star$ is optimal among all possible
constructions of curve systems on $\Sigma_g$.
\end{remark}

\section*{Acknowledgements}
This work was supported by the National Research Foundation of Korea (NRF) grant funded by the Korean government (MSIT)
(No.\ RS-2025-00513595).

\end{document}